\documentclass[11pt,a4paper,reqno]{amsart}
% !TEX root = main.tex

\usepackage{color,latexsym,amsfonts,amssymb,bbm,comment,amsmath,cite, amsthm}
\usepackage{hyperref}
\usepackage{fullpage}
\usepackage{fancyhdr}
\usepackage{graphicx}
\usepackage{tikz}
\usepackage{todonotes}
\usepackage{dsfont}

% !TEX root = main.tex

% sets of numbers

\providecommand{\R}{\mathbb{R}}
\providecommand{\Rd}{{\mathbb{R}^d}}
\newcommand{\Zd}{{\mathbb{Z}^d}}

% probability measures 
\renewcommand{\P}{\mathbb{P}}
\providecommand{\oP}{\overline{\mathbb{P}}}
\providecommand{\oE}{\overline{\mathbb{E}}}

% aux

\providecommand{\one}{\mathbbmss{1}}
\providecommand{\eps}{\varepsilon}

% greeks
\renewcommand{\t}{t}
\providecommand{\s}{\sigma}

%color
\newcommand{\blue}[1]{{\textcolor{blue}{{\bf \emph{#1}}}}{}}

\newtheorem{theorem}{Theorem}[section]
%[section] means numbering within sections

%[theorem] means sharing numbering system with theorem-environment
\newtheorem{lemma}[theorem]{Lemma}
\newtheorem{proposition}[theorem]{Proposition}
\theoremstyle{definition}

\keywords{Contact process, coexistence, hitting times}
\subjclass[2010]{60K35; 82C22}

\begin{document}

\author{Markus Heydenreich}
%\address[Markus Heydenreich, Christian Hirsch]{Mathematisches Institut, Ludwig-Maximilians-Universit\"at M\"unchen, Theresienstra\ss e 39, 80333 M\"unchen, Germany}
%\email{m.heydenreich@lmu.de, christian.hirsch@lmu.de}
\author{Christian Hirsch}
\address[Markus Heydenreich, Christian Hirsch]{Mathematisches Institut, Ludwig-Maximilians-Universit\"at M\"unchen, Theresienstra\ss e 39, 80333 M\"unchen, Germany}
\email{m.heydenreich@lmu.de,christian.hirsch@lmu.de}
\author{Daniel Valesin}
\address[Daniel Valesin]{Johann Bernoulli Institute, University of Groningen, Nijenborgh 9, 9747AG Groningen, The Netherlands}
\email{D.Rodrigues.Valesin@rug.nl}

\thanks{The work of CH was funded by LMU Munich's Institutional Strategy LMUexcellent within the framework of the German Excellence Initiative.}

\title{Uniformity of hitting times of the contact process}

\date{\today}

\begin{abstract}
For the supercritical contact process on the hyper-cubic lattice started from a single infection at the origin and conditioned on survival, we establish two uniformity results for the hitting times $t(x)$, defined for each site $x$ as the first time at which it becomes infected. First, the family of random variables $(t(x)-t(y))/|x-y|$, indexed by $x \neq y$ in $\mathbb{Z}^d$, is stochastically tight. Second, for each $\varepsilon >0$ there exists $x$ such that, for infinitely many integers $n$, $t(nx) < t((n+1)x)$ with probability larger than $1-\varepsilon$.  A key ingredient in our proofs is a tightness result concerning the essential hitting times of the supercritical contact process introduced by Garet and Marchand (Ann.\ Appl.\ Probab., 2012).\end{abstract}

\maketitle

	% !TEX root = main.tex

\section{Introduction}
\subsection{The contact process}
The contact process $\{\xi_t\}_{t \ge 0}$ is a Feller process on the configuration space $\{0,1\}^\Zd$ with generator
\begin{equation}
	\mathcal Lf(\eta)=\sum_{x\in\Zd}c(x,\eta)\big[f(\eta_x)-f(\eta)\big],
	\qquad \eta\in\{0,1\}^\Zd, 
\end{equation}
where $f\colon\{0, 1\}^\Zd \to \R$ is a cylinder function and $\eta_x$ is the configuration $\eta$ ``flipped at $x$'', i.e., 
\[ \eta_x(y)=\begin{cases}\eta(y)\quad&\text{if }y\neq x,\\ 1-\eta(y)\quad&\text{if }y=x.\end{cases}\] 
The ``flip rate'' $c(x,\eta)$ of the contact process is defined by 
\begin{equation}\label{eqFlipRateDef}
	c(x,\eta)=
	\begin{cases} 
		1&\text{if }\eta(x) = 1;\\
		\lambda \#\{y\in\Zd\colon|x - y| = 1, \eta(y) = 1\}&\text{if }\eta(x) = 0,
	\end{cases}
\end{equation}
where $|\cdot|$ denotes the 1-norm on $\Zd$, $\#S$ denotes the cardinality of a set $S$ and $\lambda \ge 0$ is a parameter of the process. 
The corresponding probability measure is denoted $\P_\lambda$. 
We refer to Liggett \cite[Chapter VI]{Ligge85} or \cite[Part I]{Ligge99} for a formal description, and for further references to Durrett \cite{Durre91}. 

The contact process is one of the prime examples of an attractive interacting particle system. Considering this process as a model for the spread of an infection leads to an insightful interpretation: lattice sites $x\in\Zd$ represent individuals, and at any given time $t\ge0$ individuals are either \emph{healthy} (if in state 0) or \emph{infected} (if in state 1). Infected individuals become healthy at rate 1, but spread the infection to their neighbors at rate $\lambda$. The higher $\lambda$, the more contagious the infection. 

The occurrence of a phase transition in $\lambda$ has been one of the driving forces for intense research activities centered around the contact process.
In order to describe this phenomenon, let $o$ denote the origin of $\mathbb{Z}^d$ and $\{\xi^o_t\}_{t\ge0}$ denote the contact process started from the configuration with a single infected individual at $o$, that is $\xi_0=\mathds{1}_{\{o\}}$, where $\mathds{1}$ is the indicator function. Then write 
\[ \{o \rightsquigarrow \infty\} = \bigcap_{t\ge0}\big\{\exists y\in\Zd\colon \xi^o_t(y) = 1\big\} \]
for the event that the infection \emph{survives} (in the complementary event, the infection is said to \textit{die out}). 
Then, there exists $\lambda_c\in(0,\infty)$ such that $\{\xi^o_t\}_{t \ge 0}$ has positive probability of survival if $\lambda > \lambda_c$ and dies out almost surely if $\lambda \le \lambda_c$. In other words, letting 
$$\rho = \P_\lambda(o \rightsquigarrow \infty),$$
denote the survival probability, we have 		$\rho	=0\text{ if }\lambda\le\lambda_c$ and $		\rho	>0\text{ if }\lambda>\lambda_c$, cf.\ \cite{Durre91,BezuiGrimm90}. 
%\begin{align*}
%	\P_\lambda(o \rightsquigarrow \infty)
%	\begin{cases}
%		\rho	&=0\qquad\text{if }\lambda\le\lambda_c;\\
%		\rho	&>0\qquad\text{if }\lambda>\lambda_c,
%	\end{cases}
%\end{align*}
%cf.\ \cite{Durre91,BezuiGrimm90}. 

\subsection{Supercritical contact process conditioned on survival}
As is standard, we always assume that the contact process is defined through a graphical representation (a family of Poisson processes consisting of \textit{transmission arrows} and \textit{recovery marks}, which induce \textit{infection paths}). We refrain from giving the details of the construction; rather, we briefly describe the terminology and notation we use. Given $x, y \in \Zd$ and $0 < s < t$, we write $(x,s) \rightsquigarrow (y,t)$ if there is an infection path from $(x,s)$ to $(y,t)$. In case $s = 0$, we simply write $x \rightsquigarrow (y,t)$. If $B, B' \subset \Zd \times [0,\infty)$, we write $B \rightsquigarrow B'$ if there is an infection path from a point in $B$ to a point in $B'$. The expressions $\{(x,s) \rightsquigarrow \infty\}$ and $\{x \rightsquigarrow \infty\}$ represent the events that there exists an infinite infection path started from $(x,s)$ and $(x,0)$, respectively.

 The graphical representation allows for the definition of the contact process with all possible initial configurations in the same probability space; one simply sets
$$\xi^A_t(x) =  \mathds{1}\{A \times \{0\} \rightsquigarrow (x,t) \},\quad A \subset \Zd,\;x \in \Zd,\; t \geq 0$$
for the process with $\xi^A_0 = \mathds{1}_A$. 

We often abuse notation and associate a configuration $\eta \in \{0,1\}^\Zd$ with the set $\{x:\eta(x) = 1\}$.

Throughout the paper, we work in the \emph{supercritical} regime. That is, we fix $\lambda>\lambda_c$ and write $\P$ instead of $\P_{\lambda}$ in the following. For these values of $\lambda$, write 
\begin{equation}
	\oP(E) = \P(E\mid o \rightsquigarrow \infty)
\end{equation}
for any measurable event $E$ on graphical constructions. That is, $\oP$ denotes the conditional probability given survival of the infection starting from the origin at time 0. We denote by $\oE$ the corresponding expectation. 
%denote the conditional probability of $E$ under the condition that the super-critical contact process $\xi^o$ started at the origin survives; and this can be extended to a conditional measure $\oP$.
Mind that the Markov property, valid under the measure $\P$, fails under the conditional measure $\oP$. For this reason, in the proofs of our main results, we often need to switch back and forth between these two measures.

\subsection{Results}
Let 
$$t(x) = \inf\{s \ge 0:\, \xi^o_s = x\}$$ 
denote the first time that the contact process starting with a single infection at the origin $o$ infects the site $x$. We call this the \emph{hitting time} of $x$. 
In particular, the random field $\{t(x)\}_{x\in\Zd}$ contains fine-grained information on the space-time evolution of the contact process. The most prominent result is the \emph{shape theorem} for hitting times \cite{BezuiGrimm90, Durre88, DurreGriff82} describing asymptotically the ``once infected area'' 
%\[H_t=\{x\in\Zd\colon t(x)\le t\}+[-1/2,1/2]^d.\] 
\[H_t=\{x\in\Zd\colon t(x)\le t\}.\] 
Indeed, the normalized expected asymptotic hitting times
$$\mu(x)= \lim_{n \to \infty} \frac{t(nx)}{n}$$
%define a norm on $\Rd$ satisfying that for all $\eps > 0$ there exists  $\oP$-a.s.\ a time $t_0 > 0$ such that for all $t\ge t_0$, 
define a norm on $\Rd$ satisfying that for all $\eps > 0$ $\oP$-a.s.\ for all $t\ge0$ large enough, \begin{equation}
%	\{x\in\Rd\colon \mu(x)\le 1-\eps\}
%	\subset \frac1tH_t
%	\subset \{x\in\Rd\colon \mu(x)\le 1+\eps\}
	\{x\in\Zd\colon \mu(x)\le (1-\eps)t\}
	\subset H_t
	\subset \{x\in\Zd\colon \mu(x)\le (1+\eps)t\}.
%	\qquad\text{ $\oP$-a.s.}
\end{equation}
Additionally,  the probability that this event fails to hold for a given large time exhibits large deviation behavior~\cite{GaretMarch14} and $\mu(x)$ is continuous in the infection rate $\lambda$ for every $x\in\Rd$~\cite{GaretMarchThere15}. 

The shape theorem and its variants describe the global picture of the infection, that is, how fast the infection is spreading through space macroscopically. What can be said about the evolution on a microscopic level is not covered. A priori, it is possible that certain branches of the infection grow very fast at times so that the set $H_t$ exhibits holes close to its boundary. Moreover, the shape theorem alone does not make predictions on how wildly hitting times fluctuate locally or to what degree of monotonicity sites are visited in the order of increasing distance to the origin. In this paper, we refine this analysis by zooming in on the boundary of the set $H_t$ and prove that $H_t$ grows in a highly uniform way. 
% However, we prove that such holes are uniformly small: 

First, we consider the order in which sites on a linear ray $\{nx\}_{n \ge 0}$ are hit by the contact process. We show that, after averaging over $n$, the probability that sites are visited in the correct order becomes arbitrarily close to 1 when choosing the step size $|x|$ sufficiently large. Moreover, also the points of increase of the expected hitting time function $\oE[t(kx)]$ are of density arbitrarily close to 1 when $|x|$ is sufficiently large.
\begin{theorem}[Uniformity of hitting times]
	\label{mainThm}
	For the supercritical contact process conditioned on survival, 
	$$\lim_{|x| \to \infty}\liminf_{n \to \infty} \frac1n \sum_{k=1}^n \oP\big(t({(k-1)x}) \le t({k x})\big) = 1,$$
	and 
	$$\lim_{|x| \to \infty}\liminf_{n \to \infty} \frac1n \#\{k \in \{1,\ldots ,n\}:\, \oE[t({(k-1)x})] \le \oE[t({kx})] \} = 1.$$
\end{theorem}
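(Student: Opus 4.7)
The plan is to establish both assertions of Theorem~\ref{mainThm} via (i) a reduction from the hitting times $t$ to Garet--Marchand's essential hitting times $\sigma$, (ii) a stationary ergodic framework for the increments $\sigma(kx) - \sigma((k-1)x)$ along the ray, and (iii) the paper's tightness result for normalized increments, used to control the increments on both sides.

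First, I would reduce to essential hitting times. Since $\sigma(y) \ge t(y)$ and $\sigma(y) - t(y)$ has uniformly tight tails under $\oP$ (in the Garet--Marchand setup), $t(kx) - t((k-1)x)$ and $\sigma(kx) - \sigma((k-1)x)$ differ by an $O_P(1)$ error uniform in $k$ and $x$. Since the natural scale of the increment is $\mu(x) \asymp |x|$, this error is negligible and both claims reduce to their $\sigma$-analogues.

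Second, I would exploit the regenerative structure of $\sigma$. By definition, at the stopping time $\sigma((k-1)x)$ there is a surviving infection at $(k-1)x$. Restarting the graphical construction from a single infection at $(k-1)x$ at that time and using attractivity together with translation invariance, one couples $\sigma(kx) - \sigma((k-1)x)$ with an i.i.d.\ surrogate $\widetilde\sigma^{(k)}(x) \stackrel{d}{=} \sigma(x)$ satisfying $\sigma(kx) - \sigma((k-1)x) \le \widetilde\sigma^{(k)}(x)$, while the matching lower bound $\sigma(kx) - \sigma((k-1)x) \ge -M(\varepsilon)|x|$ with probability $\ge 1-\varepsilon$ comes from the paper's tightness result. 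Combining these bounds with the shape-theorem identity $\tfrac{1}{n}\sigma(nx)\to\mu(x)$ and applying Birkhoff's ergodic theorem to a stationary frame built from the coupled increments yields
\[
\lim_{n\to\infty} \frac{1}{n}\sum_{k=1}^n \oP\big(\sigma((k-1)x) \le \sigma(kx)\big) = q(x)
\]
for some limit $q(x) \in [0,1]$. To show $q(x)\to 1$ as $|x|\to\infty$, note that a stationary increment has mean $\sim \mu(x) \ge c|x|$ (since $\mu$ is a norm on $\R^d$, equivalent to $|\cdot|$), is dominated above by a copy of $\sigma(x)$, and has fluctuations bounded in probability on the scale $|x|$ by tightness; a direct estimate using that $\sigma(x)$ grows like $|x|$ while the defect is strictly smaller-order then forces the probability of a negative increment to vanish. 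The expected-value assertion follows along the same lines after upgrading tightness to uniform integrability, which is a standard consequence of the known exponential moment bounds on $\sigma(x)/|x|$.

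The crux of this plan is the second step: building a genuinely stationary ergodic frame from the increments $\sigma(kx) - \sigma((k-1)x)$ is nontrivial, as these are a priori neither stationary nor of definite sign, and the i.i.d.\ "restart" surrogates furnish only a one-sided bound. Uniformly controlling the two-sided defect between the true and the coupled increments---at a scale fine enough that the individual probabilities $\oP(\sigma((k-1)x)\le\sigma(kx))$ are preserved in the Cesaro average---is exactly where the tightness of $\sigma$-increments stated in the abstract plays the decisive role.
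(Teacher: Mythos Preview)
Your proposal has a genuine gap at exactly the point you yourself flag as ``the crux'': the stationary ergodic framework for the increments $\sigma(kx)-\sigma((k-1)x)$ is never built, and in fact cannot be built in the way you suggest. These increments are \emph{not} stationary under $\oP$ (the law at step $k$ depends on the whole past of the process up to $\sigma((k-1)x)$), so Birkhoff's theorem does not apply to them. Even if it did, Birkhoff yields almost-sure convergence of empirical averages of the \emph{random} indicators, not convergence of the Ces\`aro mean of the \emph{probabilities} $\oP(\sigma((k-1)x)\le\sigma(kx))$; under genuine stationarity those probabilities would all be equal and the statement would be trivial, so invoking Birkhoff here is a category error. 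Your subsequent argument that ``$q(x)\to 1$'' is then resting on an object whose existence has not been established.

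The paper bypasses ergodic theory entirely with a short telescoping argument in expectation. One writes
\[
\oE[t(nx)] = \sum_{k=1}^n \oE\big[t(kx)-t((k-1)x)\big] \le \sum_{k\in I_{n,x}} \oE\big[t(kx)-t((k-1)x)\big],
\]
where $I_{n,x}$ is the set of $k$ with nonnegative expected increment. Each surviving summand is split as $\big(t(kx)-\sigma((k-1)x)\big)+\big(\sigma((k-1)x)-t((k-1)x)\big)$. The second piece is bounded uniformly in $k$ and $x$ by Proposition~\ref{propEssTightness}. For the first piece, the renewal property of $\sigma$ gives the one-line sub-additivity bound (Lemma~\ref{derivBoundLem})
\[
\oE\big[(t(kx)-\sigma((k-1)x))\,\mathds{1}\{t(kx)\ge\sigma((k-1)x)\}\big]\le \oP\big(t(kx)\ge\sigma((k-1)x)\big)\cdot\oE[t(x)].
\]
Summing, dividing by $n$, using the $L^1$ shape theorem $\oE[t(nx)]/n\to\mu(x)$, and then dividing by $\mu(x)$ and sending $|x|\to\infty$ (so that $\oE[t(x)]/\mu(x)\to 1$ and the constant from Proposition~\ref{propEssTightness} becomes negligible) yields both assertions at once. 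No stationarity, no coupling to i.i.d.\ surrogates, and no appeal to Theorem~\ref{thmTightness} are needed; the latter is proved in parallel from the same two ingredients.
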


The Ces\`aro limit of Theorem \ref{mainThm} implies immediately that for every $\eps > 0$,
\begin{equation}\label{eq:limsup}
	\limsup_{n \to \infty} \oP\big(t({nx}) \le t({(n+1) x})\big)>1-\eps 
\end{equation}
provided $|x|$ is large enough. 

Our second result is a tightness result which shows that also locally, the fluctuations of $t(x) - t(y)$ are of linear order in $|x - y|$.  
\begin{theorem}[Tightness]\label{thmTightness}
For any $p>0$, 
	$$\sup_{\substack{x,y\in\Zd \\ x \ne y}} \frac{\oE\left[|t(x) - t(y)|^p \right]}{|x-y|^p}  < \infty.$$
	In particular, the family
	$$\left\{\frac{t(x) - t(y)}{|x-y|}\right\}_{x, y \in \Zd, x \ne y}$$
%	\[ \{(t(x)-t(y))/|x-y|\}_{x,y\in\Zd, x \ne y}\]
is {tight} under the measure $\oP$. 
\end{theorem}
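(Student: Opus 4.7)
The plan is to compare the hitting time $t(x)$ with the \emph{essential hitting time} $\sigma(x)$ of Garet and Marchand, which satisfies $\sigma(x)\ge t(x)$ and, crucially, enjoys a regeneration structure under $\oP$: at time $\sigma(x)$ the site $x$ is infected by a descendant whose infection survives forever.

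First I would invoke the tightness result for essential hitting times which, as the abstract announces, is established as a key ingredient of this paper. Concretely, I would use
\begin{equation*}
\sup_{x\ne y}\frac{\oE\bigl[|\sigma(x)-\sigma(y)|^p\bigr]}{|x-y|^p} < \infty \qquad\text{for every }p>0.
\end{equation*}
Heuristically this follows from a subadditivity-type decomposition $\sigma(x+y)\le \sigma(x)+\widetilde\sigma(y)$ obtained by restarting the graphical construction from the successful regeneration point of $\sigma(x)$, combined with the exponential moment estimates of Garet--Marchand.

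Next, I would prove that
\begin{equation*}
\sup_{x\in\Zd}\oE\bigl[(\sigma(x)-t(x))^p\bigr] < \infty \qquad\text{for every }p>0.
\end{equation*}
By construction, $\sigma(x)-t(x)$ is the total time spent on failed attempts at infecting $x$ with a surviving descendant. The number of such attempts is stochastically dominated by a geometric variable with parameter $\rho=\P(o\rightsquigarrow\infty)$, since each attempt has success probability $\rho$ by the strong Markov property under $\P$ and translation invariance. Each failed attempt contributes the extinction time of a finite cloud, which has exponential tails by the classical Durrett--Griffeath estimate, plus a re-infection waiting time for $x$ controlled by comparison with the upper invariant measure. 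A geometric sum of variables with exponential tails has exponential tails uniformly in $x$, so all moments are bounded. Since $\oP(\cdot)\le\rho^{-1}\P(\cdot)$, the bound transfers directly from $\P$ to $\oP$.

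To conclude, the non-negativity $\sigma(z)-t(z)\ge 0$ and the triangle inequality give
\begin{equation*}
|t(x)-t(y)| \le |\sigma(x)-\sigma(y)| + (\sigma(x)-t(x)) + (\sigma(y)-t(y)),
\end{equation*}
so Minkowski combined with the two displays above produces $\|t(x)-t(y)\|_{L^p(\oP)}\le C|x-y|+2C'$, and since $|x-y|\ge 1$ the additive constant can be absorbed to give the uniform bound; tightness of $\{(t(x)-t(y))/|x-y|\}$ then follows by Markov's inequality. The most delicate step is the uniform $L^p$ bound on $\sigma(x)-t(x)$: the restart independence is natural under $\P$ via the strong Markov property at the attempt times, but passing back to $\oP$ and quantifying how the conditioning affects the restart probabilities requires care.
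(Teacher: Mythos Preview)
Your decomposition differs from the paper's, and your first ingredient is not what the paper supplies. The ``tightness result concerning essential hitting times'' announced in the abstract is Proposition~\ref{propEssTightness}, a uniform bound on $\sigma(x)-t(x)$; the paper never establishes $\sup_{x\ne y}\oE[|\sigma(x)-\sigma(y)|^p]/|x-y|^p<\infty$ directly. Instead of routing through $|\sigma(x)-\sigma(y)|$, the paper compares $t(x)$ to $\sigma(y)$: on $\{t(x)\ge\sigma(y)\}$ the surviving infection from $(y,\sigma(y))$ eventually reaches $x$, giving the clean inequality $t(x)-\sigma(y)\le\tilde t$ with $\tilde t$ distributed as $t(x-y)$ under $\oP$ and independent of $\{t(x)\ge\sigma(y)\}$ (Lemma~\ref{derivBoundLem}); together with Lemma~\ref{shapeThm} and Proposition~\ref{propEssTightness} this finishes the proof. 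Your heuristic ``$\sigma(x+y)\le\sigma(x)+\tilde\sigma(y)$'' is not literally correct --- the time $\sigma(x)+\tilde\sigma(y)$ need not be one of the $u_k(x+y)$ in the defining sequence, which is why Garet--Marchand's subadditivity carries a correction term --- whereas the ordinary hitting time $t(x)$ does satisfy the exact bound above. This is precisely why the paper uses a mixed $t$/$\sigma$ comparison rather than a pure $\sigma$ one.

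The more serious gap is in your sketch for $\sup_x\oE[(\sigma(x)-t(x))^p]<\infty$. You say the re-infection waiting time $u_{k+1}(x)-v_k(x)$ is ``controlled by comparison with the upper invariant measure'', but this is exactly where uniformity in $x$ fails for the naive argument: at time $v_k(x)$ the site $x$ lies near the boundary of $\xi^o_{v_k(x)}$, and the local configuration there is not close to the upper invariant measure in any way that is uniform in $|x|$. Garet--Marchand's Proposition~17 gives an exponential tail for $\sigma(x)-t(x)$ but with an $x$-dependent rate, for precisely this reason. The paper removes the $x$-dependence by a new shell argument (the proof of Proposition~\ref{propEssTightness}): track the first hits of $R\sim L^{1/4}$ concentric shells around $x$; with probability at least $1-(1-\rho)^R$ one of these hits spawns a surviving infection, and from that space-time point the distance to $x$ is at most $R^2$, so the location-dependent bound (Lemma~\ref{cor20}) applies with $|x|$ replaced by $R^2$. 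Your geometric-sum heuristic handles the number of failed attempts but not the waiting times between them, and that is where the substantive work lies.
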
 

\subsection{Outline of proof and essential hitting times}
\label{essHitSec}
%NEED TO PROVIDE UPPER BDS ON t(x) - t(y)
%NAIVELY EXPECT SUB-ADDITIVITY BUT NOT USEFUL
%NO STOPPING TIME UNDER CONDITIONAL
On a high level, both Theorems~\ref{mainThm} and~\ref{thmTightness} deal with bounds on differences of the form $t(x) - t(y)$. The first reflex in this situation would be to use the inequalities
\begin{align*}&(t(x) - t(y)) \cdot \mathds{1}\{t(y)<t(x)\}  \leq \tilde t(y,x) \cdot \mathds{1}\{t(y)<t(x)\},\\ &(t(y) - t(x)) \cdot \mathds{1}\{t(x)<t(y)\}  \leq \tilde t(x,y) \cdot \mathds{1}\{t(x)<t(y)\},\end{align*}
where $\tilde t(y,x)$ is the amount of time it takes for the infection spreading from $(y,t(y))$ to reach $x$, and similarly for $\tilde t(x,y)$. One is tempted to combine these inequalities with a translation invariance property: under $\P$, conditioning on say $t(y) < t(x)$, and after a space-time shift, $\tilde t(y,x)$ is distributed as $t(x-y)$.

Although true, these observations are not as useful as it would seem. There are two problems. First, the time $\tilde t(y,x)$ could be infinite, as the infection first reaching $y$ could die out before reaching $x$; this renders the above inequalities less useful. Second, under the measure $\oP$, the aforementioned translation invariance property is lost.

%On a high level, both Theorems~\ref{mainThm} and~\ref{thmTightness} deal with bounds on differences of the form $t(x) - t(y)$. The first reflex in this situation would be to apply sub-additivity in a suitable coupling, since the time from an infection to reach a site $x$ is certainly at most the time that the contact process needs to infect $x$ by a path passing through $y$. Although true, this observation is not as useful as one could hope for, since the infection started from site at $x$ at time $t(x)$ could die out. In more mathematical terms, under the unconditional law $\P$, the hitting time $t(y)$ is a stopping time and it is therefore a highly flexible tool in renewal arguments involving the strong Markov property. However, under the conditional measure $\oP$, the hitting time $t(y)$ is unsuitable for renewal-type arguments.

%=> ESSENTIAL STOPPING TIMES TO THE RESCUE
%REQUIRE BOUNDS ON sigma(x) - t(x)
These problems illustrate why the shape theorem is substantially more difficult to establish for the contact process than for the related model of first-passage percolation (more about it below). It was only with the 
%ground-breaking idea of introducing 
introduction of 
\emph{essential hitting times}~\cite{GaretMarch12} that the supercritical contact process could be analyzed via
sub-additivity techniques. 
The essential hitting times exhibit highly useful invariance properties under the conditional law $\oP$, and are therefore a central quantity in our analysis.

Loosely speaking, the essential hitting time is the first time when the infection hits a site $x$ and this particular branch of the infection survives. These are obviously not stopping times (because we are conditioning on the future), but it is possible to set up the definitions in a neat way so that the times $\sigma(x)$ are renewal times. Birkner et al.\ \cite{BirknCernyDeppeGante13} applied a similar construction in the context of a genealogical model. 

Now, we represent the expression $t(x) - t(y)$ as
$$t(x) - t(y) = (t(x) - \sigma(y)) + (\sigma(y) - t(y)).$$
As we will see in Section~\ref{thmSec}, in the specific setting of Theorems~\ref{mainThm} and~\ref{thmTightness}, the first summand is indeed amenable to a renewal argument. However, this approach is bound to fail for the second summand, since $t(y)$ is at most as big as $\sigma(y)$. The proof of the shape theorem relies crucially on bounds for $\sigma(y) - t(y)$~\cite[Proposition 17]{GaretMarch12}. However, those bounds turn out to be unsuitable for the granularity that we are aiming for, since they depend on the location $y$. To remove this dependence, we present a conceptually novel argument forming the heart of the proof for our main results.

In order to define the essential hitting times precisely, we define for any site $x\in\Zd$ a sequence of stopping times $\{u_n(x)\}_{n \ge 0}$ and $\{v_n(x)\}_{n \ge 0}$. 
These are initialised as $u_0(x)=v_0(x)=0$. 
Given $v_k(x)$, we define 
\[u_{k+1}(x)=\inf\{t\ge v_k(x):x\in\xi^o_t\}\]
to be the first time that $x$ is infected after time $v_k(x)$ (with the usual convention $\inf\varnothing=+\infty$). 
Further, given $u_{k+1}(x)$, we define 
\[v_{k+1}(x)=\sup\{s\ge0:\; (x,u_{k+1}(x)) \rightsquigarrow \Zd \times \{s\}\} \]
to be the time at which the single infection present at $x$ at time $u_{k+1}(x)$ dies out again. 

With these definitions at hand, we let 
\[K(x)=\min\{n\ge0:v_n(x)=\infty\text{ or }u_n(x)=\infty\}.\]
Note that, under $\oP$, we almost surely have $K(x) < \infty$, $u_{K(x)} < \infty$ and $v_{K(x)} = \infty$. Then let 
\[\sigma(x)=u_{K(x)}(x).\]

Clearly, the essential hitting times are larger than the standard ones, i.e., $\sigma(x) - t(x) \ge 0$. Additionally, for a given site $x \in \Zd$ the deviation of $\sigma(x)$ from $t(x)$ has exponential tails~\cite[Proposition 17]{GaretMarch12}, where the rate of decay could still depend on $x$. As a key tool in the proof of our main results, we show that in situations where stretched exponential decay is sufficient, this dependence can be entirely eliminated. 

\begin{proposition}\label{propEssTightness}
	There exist $C,\gamma > 0$ such that, for all $x \in \mathbb{Z}^d$ and all $L > 0$,
$$\oP(\sigma(x) - t(x) > L) < C\exp\left\{-L^\gamma\right\}.$$
\end{proposition}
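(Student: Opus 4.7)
The plan is to express $\sigma(x)-t(x)$ as a sum over the ``failed attempts'' at $x$ and to control each summand with bounds that do not depend on $x$. Set $T_k := v_k(x)-u_k(x)$ for the lifetime of the $k$-th failed sub-infection at $x$ and $S_k := u_{k+1}(x)-v_k(x)$ for the gap until the next visit of $\xi^o$ to $x$, so that
\[
    \sigma(x)-t(x) \;=\; \sum_{k=1}^{K(x)-1}(T_k + S_k).
\]
A union bound of the form
\[
    \oP(\sigma(x)-t(x) > L) \;\leq\; \oP(K(x) > m) + m\,\oP(T_1 > \tfrac{L}{2m}) + m\,\oP(S_1 > \tfrac{L}{2m})
\]
with $m$ a small power of $L$ will then reduce the claim to $x$-uniform (stretched) exponential tails on each of $K(x)$, $T_1$ and $S_1$.

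The bounds on $K(x)$ and $T_1$ are essentially standard. By the strong Markov property of the graphical construction at the stopping time $u_k(x)$, the sub-infection from $(x,u_k(x))$ is an independent copy of the contact process from a single site and therefore survives with probability exactly $\rho > 0$; iterating gives $\P(K(x) > m\mid t(x)<\infty) \leq (1-\rho)^m$, uniformly in $x$. Conditioning on extinction $\{v_1(x)<\infty\}$, the variable $T_1$ is distributed as the lifetime of a single-site contact process conditioned on dying out, which has uniform exponential tails by classical estimates underlying \cite{BezuiGrimm90}.

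The heart of the argument is the uniform control of the waiting time $S_1$: at time $v_1(x)$ the sub-infection from $(x,u_1(x))$ has just become extinct, and nothing a priori forces any infected site of $\xi^o$ to remain close to $x$, so the next visit of $\xi^o$ to $x$ could in principle come from arbitrarily far away in a way that scales with $|x|$. To circumvent this, I plan to run a Bezuidenhout--Grimmett block renormalization on a mesoscopic scale $N$ around $(x,v_1(x))$: on a good block event of probability at least $1-\exp(-cN^\gamma)$, the graphical marks in an $N$-sized space-time box near $x$ produce a dense infected cluster in a neighborhood of $x$ which, by comparison with supercritical oriented percolation, forces $\xi^o$ to revisit $x$ within time $O(N)$. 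Crucially, the block event can be arranged to depend only on graphical marks outside the space-time region explored by the failed sub-infection, preserving the independence needed after conditioning on $\{v_1(x)<\infty\}$.

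The delicate point---and the main obstacle---is precisely this uniform bound on $S_1$: a direct application of existing estimates (such as Garet--Marchand's Proposition~17, which only provides an $x$-dependent rate) does not yield the $x$-uniformity, and one really has to exhibit, uniformly in $x$, a scale at which the supercritical block event ``regenerates'' a local patch of infection near $x$ regardless of the global configuration and of the survival conditioning $\oP(\cdot)=\P(\cdot\mid o\rightsquigarrow\infty)$. Once this is in place, inserting the three estimates into the union bound above and optimizing $m=L^\alpha$ gives the stated bound $\oP(\sigma(x)-t(x) > L) < C\exp(-L^\gamma)$.
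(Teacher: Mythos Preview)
Your decomposition $\sigma(x)-t(x)=\sum_{k<K(x)}(T_k+S_k)$ is natural, and the bounds on $K(x)$ and on the $T_k$ go through exactly as you say. The genuine gap is in the control of the return times $S_k$, and the block-renormalization sketch you give does not close it. A Bezuidenhout--Grimmett block event is a statement about the Harris marks in a space--time box; it can at best say ``if some site of $\xi^o$ lies in $B(x,N)$ at time $v_k(x)$, then $x$ is re-infected within $O(N)$''. But nothing in your setup guarantees the hypothesis: at time $v_k(x)$ the descendants of $(x,u_k(x))$ are extinct, and the remaining sites of $\xi^o_{v_k(x)}$ could all lie at distance of order $|x|$ from $x$ (this is not forbidden by the survival conditioning, which concerns $o$). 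In that case $S_k$ is genuinely of order $|x|$, and no estimate on the marks near $x$ can prevent it. The independence remark (``outside the region explored by the failed sub-infection'') is correct but beside the point: the problem is not dependence, it is that $\xi^o$ need not be present in the box at all. Note also that the $S_k$ are not identically distributed (each depends on the random set $\xi^o_{v_k(x)}$), so the shorthand $m\,\oP(S_1>\cdot)$ already hides a uniformity-in-$k$ issue.

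The paper avoids this difficulty by never trying to bound the return time after a failed visit. Instead it works \emph{before} $t(x)$: it places $R\sim L^{1/4}$ concentric shells $\partial B(x,r_i)$ around $x$ and records the stopping times $U_i$ at which $\xi^o$ first enters each shell, together with the entry point $Z_i$. By the strong Markov property at $U_i$, the process from $(Z_i,U_i)$ survives with probability $\rho$ independently of the past, so with probability $1-(1-\rho)^R$ some $(Z_n,U_n)$ launches a surviving infection from a site within distance $R^2$ of $x$; on that event the $x$-dependent bound of Garet--Marchand (their Theorem~15, recast here as Lemma~\ref{cor20}) applies with $|x-Z_n|\le R^2\le \sqrt{L}$ and yields the stretched-exponential tail. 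The remaining error (some dead branch outliving the next shell crossing) is handled by the linear-speed estimate. The key conceptual difference is that the paper manufactures a surviving restart point \emph{near $x$} deterministically along the incoming infection, rather than hoping that $\xi^o$ happens to be near $x$ after a failed attempt.
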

An immediate consequence is that the differences between hitting times and essential hitting times $\{\sigma(x)-t(x)\}_{x\in\Zd}$ is a tight family of random variables. 

\subsection{Discussion and open problems}
\subsubsection{First-passage percolation and coexistence for competition models} 
%Uniformity results in the spirit of Theorem 1.1 do not only apply to the contact process but also in the setting of \emph{first-passage percolation}~\cite{GaretMarch05, Hoffm05}. 

%A limit result similar to Theorem \ref{mainThm} in the context of \emph{first-passage percolation} has been established by Garet and Marchand \cite{GaretMarch05} and independently by Hoffmann \cite{Hoffm05}.

In first-passage percolation, every bond $\{x,y\}$ (with $|x-y|=1$) in $\Zd$ is assigned a non-negative value $\tau_{\{x,y\}}$. The family $\{\tau_{\{x,y\}}:x,y\in\Zd,\;|x-y|=1\}$  is typically sampled from some translation-invariant probability distribution on functions from the nearest-neighbor edges of $\Zd$ to $[0,\infty)$; most often, the times are independent, following some common law $\mu$ supported on $[0,\infty)$. For any two vertices $x$ and $y$, the \textit{passage time} from $x$ to $y$ is then defined as
\begin{equation}\label{eq:random_metric} d(x,y)=\inf\left\{\sum_{k=1}^{|\pi|}\tau_{\{\pi_{k-1},\pi_k\}}:\pi\text{ is a path from $x$ to $y$}\right\}, \end{equation} 
where a path $\pi$ from $x$ to $y$ is a finite sequence $\pi=(\pi_0,\pi_1,\dots,\pi_n)$ with $n\in\mathbb N$, $\pi_0=x$, $\pi_n=y$, and $|\pi_k-\pi_{k-1}|=1$ for $k=1,\dots,n$, and  $|\pi| = n$ is its length. Thus defined, $d(\cdot,\cdot)$ is a random (pseudo-)metric on $\Zd$.

A \textit{growth process} can be defined from $d$ by letting
\begin{equation}\label{eq:growth_process}
\zeta_t(x) = \mathds{1}\{d(o,x) \leq t \},\quad t \geq 0,\; x \in \Zd.
\end{equation}
Very general shape theorems have been obtained for this process; see for instance \cite{GaretMarch12}. In the particular case in which the random variables $\tau_{\{x,y\}}$ are i.i.d. and exponentially distributed, it can be shown that $(\zeta_t)_{t \geq 0}$ is a Markov process on $\{0,1\}^\Zd$; it is called the \textit{Richardson model}, and can be seen as the contact process with no recoveries. That is, transitions from state 1 to state 0 are suppressed.

In this context of first passage percolation, a statement analogous to our Theorem \ref{mainThm} is that, if $|x|$ is large enough, then
\begin{equation}
\label{eq:analogous_statement}
\limsup_{n\to\infty} \P\left(d(o,nx) < d(o,(n+1)x) \right) > \frac12.
\end{equation}
This has been proved in \cite{haggstrom1998first} for the Richardson model on $\mathbb{Z}^2$ and for much more general passage-time distributions on $\Zd$ in \cite{GaretMarch05} and \cite{Hoffm05}.

Apart from the growth process \eqref{eq:growth_process}, the random metric in \eqref{eq:random_metric} can be used to define a \textit{competition process}. For this, fix two distinct \textit{sources} $x_1, x_2 \in \Zd$, let $$d(\{x_1,x_2\},x) = \min(d(x_1,x),d(x_2,x))$$ and define a process $\eta_t \in \{0,1,2\}^\Zd$ by setting
\begin{equation}
\label{eq:competition_process}
\eta_t(x) = \begin{cases}0&\text{if } t > d(\{x_1,x_2\},x),\\1&\text{if } t \leq d(\{x_1,x_2\},x),\; d(x_1,x) \le d(x_2,x),\\
2&\text{if }   t \leq d(\{x_1,x_2\},x),\; d(x_1,x) > d(x_2,x) \end{cases} \quad\qquad t \geq 0,\; x \in \Zd.
\end{equation}
This can be interpreted as follows: at time 0, a particle of type 1 is placed at $x_1$ and a particle of type 2 is placed at $x_2$; each type then invades adjacent sites as in the growth process \eqref{eq:growth_process}, with the rule that once a site has been taken by one of the types, it stays that way permanently. Then, $\Zd$ is partitioned into the sets
$$C_i = \{x\in\Zd:\;\lim_{t\to\infty}\eta_t(x) = i\},\quad i = 1,2.$$
The event $\{\#C_1=\#C_2 = \infty\}$ is called the \textit{coexistence event}, and one then wonders if it has positive probability. Letting $x_1= o$ and $x_2 = \bar{x} \in \Zd$, if the probability of coexistence with these two sources was zero, then symmetry would give
$$\P( \#C_1 = \infty,\;\#C_2 < \infty) = \P(\#C_1 < \infty,\;\#C_2 = \infty) = \frac12,$$
from which \eqref{eq:competition_process}  would allow us to conclude that
\begin{equation} \label{eq:first_impli}\lim_{n \to \infty} \P(d(o,n\bar{x}) < d(\bar{x},n\bar{x})) = \frac12,\end{equation}
which is equivalent to
\begin{equation} \label{eq:second_impli}\lim_{n \to \infty} \P(d(o,n\bar{x}) < d(o,(n+1)\bar{x})) = \frac12,\end{equation}
contradicting \eqref{eq:analogous_statement}, at least if $|\bar{x}|$ is large enough. Hence, for first-passage percolation, proving \eqref{eq:analogous_statement} is the key step in proving that coexistence can occur (and indeed, the aforementioned references \cite{haggstrom1998first, GaretMarch05, Hoffm05} were all primarily concerned with the coexistence problem).

A natural question, then, is whether our Theorem \ref{mainThm} can be reinterpreted as a coexistence result. The competition model that corresponds to the contact process in the same way as model \eqref{eq:competition_process} corresponds to the growth model \eqref{eq:growth_process} is Neuhauser's \textit{multitype contact process}, introduced in \cite{Neuha92}. However, the equivalence between \eqref{eq:first_impli} and \eqref{eq:second_impli}, which stems from the fact that both the growth process $(\zeta_t)_{t\geq 0}$ and the competition model $(\eta_t)_{t\geq 0}$ are defined from the same random metric, has no evident analogue for the contact process. Proving that the coexistence event has positive probability for the multitype contact process on $\Zd$ is still an open problem, apart from the easiest case of $d=1$ (see \cite{andjel2010survival} and \cite{valesin2010multitype}).

\subsubsection{Contact process in random environment.}
In a series of papers, the shape theorem has been extended to the \emph{contact process in random environment} by Garet and Marchand~\cite{GaretMarch12, GaretMarch14}. In this setting, the infection rate $\lambda$ is not constant. Rather, in \eqref{eqFlipRateDef}, we replace $\lambda$ with a random variable $\lambda_{\{x,y\}}$, and these random variables form an i.i.d.\ family with support in $[\lambda_{\mathsf{min}},\lambda_{\mathsf{max}}]$ where $0<\lambda_c<\lambda_{\mathsf{min}}<\lambda_{\mathsf{max}}<\infty$. Indeed, following the setup in \cite{GaretMarch12}, our results extend in a straightforward manner to this random setting. 

\iffalse \subsubsection{Density of record times.} Theorem~\ref{mainThm} implies that the expected hitting times $\{h_x(k)\}_{k \ge 0} =  \{\oE[t(kx)]\}_{k \ge 0}$ exhibit infinitely many \emph{points of increase}, i.e. points where $\oE[h_x(k - 1)] \le \oE[h_x(k)]$. In fact, the density of points of increase becomes arbitrarily close to 1 as $x$ grows large. As we will see in the Section~\ref{thmSec}, Theorem~\ref{mainThm} can be strengthened in the sense that points of increase can be replaced by \emph{record points}. That is, integers $k \ge 0$ such that $\oE[h_x(k)] \ge \oE[h_x(\ell)]$ for all $\ell \le k$. \fi
\bigskip

\subsection{Organization of the paper}
In the forthcoming section, we first prove Proposition \ref{propEssTightness}. This result gives strong quantitative estimates on the relation of $t(x)$ and $\sigma(x)$, and is the key to our proofs of Theorems \ref{mainThm} and \ref{thmTightness}, both of which we defer to Section \ref{thmSec}. 

	% !TEX root = main.tex

\section{Proof of Proposition~\ref{propEssTightness}}

%RECALL PROBLEM
%BIG IDEA: LEVEL-CROSSING UNDER UNCONDITIONAL MEASURE
The challenge of Proposition~\ref{propEssTightness} is to establish bounds on the defects $\sigma(x) - \t(x)$ for sites $x$ far away from the origin.
The essence of the proof of Proposition~\ref{propEssTightness} is a carefully devised level-crossing argument leveraging the renewal structure of standard hitting times under the unconditioned measure $\P$. To achieve this goal, we introduce variants of the times $\{u_n, v_n\}_{n\ge1}$ from Section~\ref{essHitSec} that are tailor-made for the purpose of our proof.

%TAKE LARGE BALL AROUND x
%IF AT LEAST ONE SHELL GENERATES SURVIVING INFECTION => USE GM
More precisely, after fixing $L$ as in the statement of the proposition, we consider the ball 
$$B(x, R^2) = \{y \in \Zd:\, |y - x| \le R^2 \}$$
of radius $R^2$ around $x$, where $R$ is large and depends on $L$, but not on $x$. We pay special attention to the times at which the  infection hits each of the  $R$ shells 
$$\partial B(x,iR) = \{y \in \Zd: |y-x| = iR \},\quad 1\le i \le R. $$
If for some $i$ the space-time point where the infection hits the $i$th shell for the first time gives rise to a surviving contact process, then, by a renewal argument, it suffices to bound the essential hitting time of a point at distance at most $R^2$ from the origin. In particular, for this it is sufficient to invoke earlier location-dependent upper bounds.

%RENEWAL => PROBABILITY THAT ALL INFECTIONS DIE OUT DECAYS EXPONENTIALLY
The alternative is that upon hitting each of the $R$ shells, the infection started from the hitting point dies out. In the super-critical regime, a further renewal argument shows that this is an event of probability decaying geometrically in the number of shells.

%RENEWAL REQUIRES RAPID DEATH OF NON-SURVIVING INFECTIONS
We clearly need a due amount of care in order to make the renewal argument rigorous. In particular, it is important to know that the infections starting from the hitting points die out before the next shell is reached.

Due to the regenerative arguments sketched above, we need the location-dependent bounds from~\cite[Theorem 15]{GaretMarch14} not only if the infection is started from the origin, but from an arbitrary set $A \subseteq \Zd$ with $o \in A$. More precisely, we now put $u^A_0(x) = v^A_0(x) = 0$ and
\[u^A_{k+1}(x)=\inf\{t\ge v^A_k(x):x\in\xi^A_t\},\]
whereas the recursion for $v^A_{k+1}$ is unchanged, i.e., 
\[v^A_{k+1}(x)=\sup\{s\ge0:\; (x,u^A_{k+1}(x)) \rightsquigarrow \Zd \times \{s\} \}. \]
Finally, we set $\sigma^A(x)=u_{K^A(x)}(x) $ where 
\[K^A(x)=\min\{n\ge0:v^A_n(x)=\infty\text{ or }u^A_n(x)=\infty\}.\]

%The proof is based on the shape theorem and the notion of essential hitting times.

\begin{lemma}
	\label{cor20}
There exist $c$, $c' > 0$ such that for all $L > 0$, $x \in \Zd$ and $A \subseteq \Zd$ with $o \in A$,
\begin{align}\label{eqcor20}
\oP\left(\sigma^A(x) > L\right) \leq \exp\{-cL + c'|x|\}.
\end{align}
\end{lemma}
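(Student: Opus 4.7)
The plan is to adapt the renewal argument underlying~\cite[Theorem~15]{GaretMarch14} (which covers the case $A = \{o\}$) to a general initial set $A \ni o$, relying on the monotonicity of the contact process. The first step is a reduction to the unconditioned measure: since $o \in A$ yields $\xi^o_t \subseteq \xi^A_t$, we have $\{o \rightsquigarrow \infty\} \subseteq \{A \rightsquigarrow \infty\}$, so that
\[\oP\bigl(\sigma^A(x) > L\bigr) \leq \tfrac{1}{\rho}\,\P\bigl(\sigma^A(x) > L,\, A \rightsquigarrow \infty\bigr),\]
and it is enough to bound the right-hand side by $\rho\cdot\exp\{-cL + c'|x|\}$, with the resulting $\rho^{-1}$ absorbed into the constants.

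The key input is the renewal structure of the sequence $(u^A_n, v^A_n)_{n \ge 0}$: each $u^A_n$ is a stopping time on $\{u^A_n < \infty\}$, and by the strong Markov property at $u^A_n$, the event $\{v^A_n = \infty\}$ is independent of $\mathcal F_{u^A_n}$ and has probability $\rho$, yielding the geometric estimate $\P(K^A(x) > N) \leq (1-\rho)^N$. To bound $\sigma^A(x) = u^A_{K^A(x)}$, we telescope
\[\sigma^A(x) = u^A_1 + \sum_{n=1}^{K^A(x)-1}(v^A_n - u^A_n) + \sum_{n=2}^{K^A(x)}(u^A_n - v^A_{n-1})\]
and bound each type of summand separately. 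The first hitting time satisfies $u^A_1 \leq t(x)$ by monotonicity, and $\P(t(x) > L) \leq \exp\{-cL + c'|x|\}$ is the classical large-deviation estimate associated to the shape theorem. Each failed-branch lifetime $v^A_n - u^A_n$ on $\{v^A_n < \infty\}$ is the extinction time of a contact process started from $\{x\}$ conditioned to die out, and has an exponential tail uniform in $n, x, A$. Each re-hit time $u^A_n - v^A_{n-1}$ for $n \ge 2$ enjoys an exponential tail uniform in $n, x, A$, because by time $v^A_{n-1} \geq u^A_1$ the process $\xi^A$ has already visited $x$ and---by the complete convergence theorem for the supercritical contact process---populated a neighborhood of $x$ with positive density. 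A union bound over the value of $K^A(x) = N$, combining the geometric decay with these exponential tails, yields the desired bound for $L$ much larger than $|x|$; for $L = O(|x|)$ the claim is trivial after enlarging $c'$.

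The main obstacle is making the ``locally dense near $x$'' heuristic uniform in $A$ and $n$. Since $v^A_{n-1}$ is not a stopping time---it depends on the infinite future of the branch started at $(x, u^A_{n-1})$---a direct strong Markov argument at $v^A_{n-1}$ is unavailable. The standard workaround, as in the original Garet--Marchand proofs, is to couple the branch from $(x, u^A_{n-1})$ with an auxiliary contact process started from $\{x\}$ at time $u^A_{n-1}$, exploiting the graphical representation to decouple the branch's fate from the residual dynamics of $\xi^A$, and thereby permitting hitting-time estimates on the residual process once the branch has gone extinct. Combined with the geometric decay of $K^A(x)$ and the location-dependent estimate on $u^A_1$, this delivers the uniform-in-$A$ exponential bound claimed in the lemma.
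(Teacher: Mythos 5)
Your proposal is essentially a reconstruction of the cited result~\cite[Theorem~15]{GaretMarch14} adapted to general $A\ni o$, which is exactly what the paper does: the paper's proof simply invokes that theorem to get $\oE[\exp\{\beta\sigma^A(x)\}]\le\exp\{\gamma|x|\}$, notes that the Garet--Marchand argument is unchanged for general $A$, and then deduces the tail bound by the exponential Chebyshev inequality. Your route --- telescoping $\sigma^A(x)$ and taking a direct union bound over $K^A(x)$ --- is the same renewal machinery packaged as a tail estimate rather than a moment estimate, so there is no substantive difference in approach.

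Two small loose ends in your sketch. First, the bound you quote for the initial hitting time, $\P(t(x)>L)\le\exp\{-cL+c'|x|\}$, cannot hold under the unconditioned measure $\P$, since $\P(t(x)=\infty)\ge 1-\rho>0$; you must stay on $\{o\rightsquigarrow\infty\}$ (or $\{A\rightsquigarrow\infty\}$) for this estimate, which your earlier reduction in fact allows. Second, and more importantly, the exponential tail for the re-hit increments $u^A_n-v^A_{n-1}$ --- which you motivate heuristically via complete convergence and then concede requires the ``standard workaround'' because $v^A_{n-1}$ is not a stopping time --- is precisely the technical core of Garet--Marchand's proof. Deferring it to the cited source is legitimate (indeed it is exactly what the paper does), but the complete-convergence heuristic as stated is not the actual mechanism, so the reader should not take it as a sketch of that step.
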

%For $A = \{0\}$, this follows immediately from \cite[Corollary 20]{GaretMarch14}, the novelty being uniformity in the initial condition $A$. 
%In view of \cite[Corollary 20]{GaretMarch14}, it is plausible that $C$ may be chosen as a linear function of $\|x\|$; however, we do not need this to prove the (much stronger) bound of Proposition \ref{propEssTightness}. 
\begin{proof}
The result follows from Chebyshev's inequality and the claim that there exist $\beta, \gamma > 0$ such that, for all $x \in \Zd$ and all $A \subseteq \Zd$ with $o \in A$,
$$\oE(\exp\{\beta \sigma^A(x)\}) \leq \exp\{\gamma |x|\}. $$
	For the case $A = \{o\}$, this is \cite[Theorem 15]{GaretMarch14}. The statement with general $A \subseteq \Zd$ with $o \in A$ is proved in exactly the same way, so we do not go over the details.
\end{proof}

\begin{proof}[Proof of Proposition~\ref{propEssTightness}]
  Fix $L > 0$, which we may assume to be large throughout the proof without loss of generality. Define $R = \lfloor L^{1/4}\rfloor$. Let us first fix $x \in \Zd$ with $|x| \leq R^2$. Then, by Lemma~\ref{cor20},
\begin{equation}\oP(\sigma(x) - t(x) > L) \leq \oP(\sigma(x) > L) \stackrel{\eqref{eqcor20}}{\leq} \exp\left\{-cL + c'|x|\right\} < \exp\left\{-cL/2\right\}.\label{eq:zeroth_ing}\end{equation}

Now we fix $x$ with $|x| > R^2$ and aim to show that
\begin{equation}\label{eqDestBd}\P(\sigma(x) < \infty,\;\sigma(x) - t(x) > L) < C_0\exp\{-L^{\gamma_0}\}\end{equation}
for some $C_0,\gamma_0 > 0$; the desired bound then follows since $\oP(\sigma(x) < \infty) = 1$. As suggested in the outline of proof, we define the shell radius $r_i = R(R-i+1)$ and let
$$U_i = \inf\{t: \xi_t^o \cap B(x,r_i) \neq \varnothing\},\qquad i = 1,\ldots, R.$$
denote the first time that the infection hits the $i$th shell. On the event $\{U_i < \infty\}$, let $Z_i \in \partial B(x,r_i)$ denote the unique point of $\xi^o_{U_i} \cap B(x, r_i)$. Also on $\{U_i < \infty\}$, let
$$V_i = \inf\{s \geq U_i:\;(Z_i,U_i) \not\rightsquigarrow \Zd \times \{s\}\}$$
	denote the time at which  the infection starting from $(Z_i, U_i)$ dies out.

	Then there are three alternative scenarios under which the event $\{\sigma(x) < \infty,\;\sigma(x) - t(x) > L\}$ can occur: 
	\begin{enumerate}
		\item there exists at least one shell $\partial B(x, r_i)$ such that the infection generated by the first hitting point survives forever, i.e., $V_i = \infty$,
		\item for each $i \in \{1,\ldots, n\}$ the infection generated by the hitting point $(Z_i, U_i)$ dies out before the global infection reaches the $(i+1)$th shell, 
		\item one of these non-surviving infections lives longer than it takes the global infection to reach the next shell.
	\end{enumerate}
	More precisely,
\begin{equation}\label{eq:3terms}\begin{split}
\P(\sigma(x) < \infty,\; \sigma(x) - t(x) > L) &\leq \sum_{n=1}^R\P(U_n < \infty,\; V_n = \infty,\; \sigma(x) - t(x) > L) \\&+
 \P(U_1 < V_1 < U_2 < V_2 <\cdots < U_R < V_R < \infty)\\&
	+ \sum_{n=1}^{R-1} \P(U_{n+1} < V_{n} <\infty)
\end{split}\end{equation}
	We separately bound the three terms on the right-hand side. By repeated use of the Markov property, we have 
	\begin{equation}\P(U_1 < V_1 < U_2 < V_2 <\cdots < U_R < V_R < \infty)\leq(1-\rho)^R, \label{eq:first_ing}\end{equation} 
		where we recall that $\rho = \P_\lambda(o \rightsquigarrow \infty)$  denotes the survival probability. Next, for any $n$,
\begin{align}
&\P(U_n < \infty,\; V_n = \infty,\;\sigma(x) - t(x) > L) \leq \P(U_n < \infty,\;V_n = \infty,\;\sigma(x) - U_n > L) \nonumber\\
&=\sum_{y \in \partial B(x,r_n)}\; \sum_{\substack{A \subseteq \Zd,\\A\text{ finite}}} \P(U_n < \infty,\; Z_n = y,\; (y, U_n) \rightsquigarrow \infty,\; \xi^o_{U_n} = A,\; \sigma(x) - U_n > L)\nonumber\\
&= \sum_{y \in \partial B(x,r_n)}\; \sum_{\substack{A \subseteq \Zd,\\A \text{ finite}}} \P(U_n < \infty,\; Z_n = y,\;  \xi^o_{U_n} = A) \cdot \rho\cdot \oP(\sigma^{A-y}(x-y) > L)\nonumber\\
&\leq \rho \cdot \P(U_n < \infty) \cdot \exp\{-cL/2\},\label{eq:second_ing}
\end{align}
	the last inequality is obtained as in \eqref{eq:zeroth_ing} using $|x - y| \le R^2$. We now turn to the term inside the third sum on the right-hand side of \eqref{eq:3terms}. Since the contact process is in the super-critical regime, the survival time of a non-surviving infection started at a single site has exponential tails, cf.~\cite[Proposition 5]{GaretMarch12}, so that the Markov property yields
\begin{equation}\begin{split}\P(U_{n+1} < V_n < \infty) &\leq \P(V_n - U_n \in (\sqrt{R}, \infty)) + \P(U_{n+1} - U_n < 2\sqrt{R})\\
&\leq \exp\{-c\sqrt{R} \} + \P(\partial B(x,r_i) \times [0,2\sqrt{R}] \rightsquigarrow B(x,r_{i+1}) \times[0,2\sqrt{R}]).\end{split}\label{eq:third_ing}
\end{equation}
Since the propagation speed of the infection is at most linear outside an event of exponentially decaying probability ~\cite[Proposition 5]{GaretMarch12}, the second term on the right-hand side is less than
\begin{align}\nonumber&\#\partial B(x,r_i) \cdot 2d\lambda \cdot \int_0^{2\sqrt{R}} \max_{y \in \partial B(x,r_i)} \P\Big((y,s) \rightsquigarrow B(x,r_{i+1})\times [s,2\sqrt{R}] \Big)\; \mathrm{d}s
\\\nonumber&\leq \#\partial B(x,r_i) \cdot 2d\lambda \cdot 2\sqrt{R} \cdot \P\Big(\bigcup_{s \leq 2\sqrt{R}}\xi^y_s \nsubseteq B(y, R/2)\Big) \\&\leq \#\partial B(x,r_i) \cdot 2d\lambda \cdot 2\sqrt{R} \cdot \exp\{-c\sqrt{R}\}.\label{eq:third_ing0}
\end{align}
Combining the bounds \eqref{eq:3terms}, \eqref{eq:first_ing}, \eqref{eq:second_ing}, \eqref{eq:third_ing} and \eqref{eq:third_ing0} implies  \eqref{eqDestBd} and finishes the proof.
\end{proof}

	\section{Proof of Theorems~\ref{mainThm} and~\ref{thmTightness}}
\label{thmSec}

%%%%%%%%%%%%%%%%%%%%%%%%%%%%%%%%%%%%%%%%%%%%%%%%%%%%%%%%%%%%%%%%%%%%%%%%%%%
%%%%%%%%%%%%%%%%%%%%%%%%%%%%%%%%%%%%%%%%%%%%%%%%%%%%%%%%%%%%%%%%%%%%%%%%%%%
\subsection{Proof of Theorem~\ref{mainThm}}
%%%%%%%%%%%%%%%%%%%%%%%%%%%%%%%%%%%%%%%%%%%%%%%%%%%%%%%%%%%%%%%%%%%%%%%%%%%
%%%%%%%%%%%%%%%%%%%%%%%%%%%%%%%%%%%%%%%%%%%%%%%%%%%%%%%%%%%%%%%%%%%%%%%%%%%

Before getting into the details, we provide a rough outline of the proof. To prove Theorem~\ref{mainThm}, we consider $\oE[\t(kx) - \t((k-1)x)]$ as discrete derivatives of the expected hitting time 
$\oE\t(n x)$, in the sense that 
$$\oE\t(n x)  = \sum_{k = 1}^n \oE[\t(k x) - \t((k-1)  x)].$$
The shape theorem for the supercritical contact process shows that the left-hand side is of the order $n\mu(x)$. Hence, a large number of small derivatives $\oE[\t((n+1) x) - \t(n x)]$ would need to be compensated by a substantial number of atypically large derivatives. However, leveraging the concept of essential hitting times allows us to prove the following sub-additivity bound that is sufficiently strong to exclude excessive growth.

\begin{lemma}
	\label{derivBoundLem}
	Let $p > 0$ and $x, y \in \Zd$ be arbitrary. Then, 
	$$\oE\left[|t(x) - \sigma(y)|^p\cdot \mathds{1}\{\t(x) \ge \s(y) \} \right] \leq \oP(\t(x) \ge \s(y)) \cdot \oE \left[t(x-y)^p \right] $$
\end{lemma}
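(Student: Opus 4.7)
The plan is to couple $t(x) - \sigma(y)$ with a translated copy of $t(x-y)$, exploiting that $\sigma(y)$ is by construction a time at which a surviving infection seed sits at $y$. I would introduce
$$\tilde{t}(y\to x) = \inf\{s \ge 0 : (y, \sigma(y)) \rightsquigarrow (x, \sigma(y) + s)\},$$
the waiting time for this seed to reach $x$. The first step is the deterministic domination $0 \le t(x) - \sigma(y) \le \tilde{t}(y\to x)$ on $\{t(x)\ge\sigma(y)\}$, which holds because $y \in \xi^o_{\sigma(y)}$, so the infection emanating from $(y,\sigma(y))$ is one possible route to $x$. This reduces matters to bounding $\oE[\tilde{t}(y\to x)^p\, \mathds{1}\{t(x) \ge \sigma(y)\}]$.

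Next I would decompose according to $K(y)$. Writing $\tilde{t}_k(y\to x) = \inf\{s \ge 0 : (y, u_k(y)) \rightsquigarrow (x, u_k(y) + s)\}$ and $S_k = \{(y, u_k(y)) \rightsquigarrow \infty\}$, one has $\sigma(y) = u_{K(y)}(y)$, $\tilde{t}(y\to x) = \tilde{t}_{K(y)}(y\to x)$, and $\{K(y)=k\} = A_k \cap S_k$, where $A_k = \{u_k(y) < \infty,\, v_j(y) < \infty \text{ for } 1 \le j < k\}$ is $\mathcal{F}_{u_k(y)}$-measurable with $\mathcal{F}_t$ the natural filtration of the graphical representation. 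Crucially, $\{t(x) \ge u_k(y)\}$ is also $\mathcal{F}_{u_k(y)}$-measurable, so all ``past'' information lies in $\mathcal{F}_{u_k(y)}$, while $\tilde{t}_k(y\to x)$ and $S_k$ depend only on the post-$u_k(y)$ graphical representation.

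The core step is then the strong Markov property applied at the stopping time $u_k(y)$, combined with translation invariance of the Poisson processes underlying the graphical representation: conditionally on $\mathcal{F}_{u_k(y)}$ on $\{u_k(y) < \infty\}$, the pair $(\tilde{t}_k(y\to x), \mathds{1}_{S_k})$ has the same $\P$-law as $(t(x-y), \mathds{1}\{o \rightsquigarrow \infty\})$. This gives
$$\mathbb{E}\bigl[\tilde{t}_k(y\to x)^p\, \mathds{1}_{S_k}\,\bigm|\,\mathcal{F}_{u_k(y)}\bigr] = \rho\, \oE[t(x-y)^p] \quad \text{on } \{u_k(y) < \infty\}.$$
Multiplying by the $\mathcal{F}_{u_k(y)}$-indicator of $A_k \cap \{t(x) \ge u_k(y)\}$, taking $\P$-expectations and summing over $k\ge 1$ one obtains
$$\mathbb{E}\bigl[\tilde{t}(y\to x)^p\, \mathds{1}\{t(x) \ge \sigma(y),\,\sigma(y)<\infty\}\bigr] = \rho\, \oE[t(x-y)^p]\cdot \sum_{k\ge 1}\P\bigl(A_k \cap \{t(x) \ge u_k(y)\}\bigr).$$
Applying the same telescoping with $\tilde{t}_k^p$ replaced by $1$ identifies the sum with $\rho^{-1}\P(t(x)\ge\sigma(y),\,\sigma(y)<\infty)$. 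Dividing by $\rho$ to pass from $\P$ to $\oP$ and using $\{\sigma(y)<\infty\}\subseteq \{o\rightsquigarrow\infty\}$ together with the fact that $\sigma(y)<\infty$ almost surely under $\oP$, the two $\rho$-factors cancel and yield
$$\oE\bigl[\tilde{t}(y\to x)^p\,\mathds{1}\{t(x)\ge\sigma(y)\}\bigr] = \oE[t(x-y)^p]\cdot \oP(t(x)\ge\sigma(y)),$$
which combined with the initial domination proves the lemma.

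The main obstacle is not a single sharp estimate but the careful bookkeeping at this last step: one must disentangle the $\mathcal{F}_{u_k(y)}$-measurable ingredients ($A_k$ and $\{t(x)\ge u_k(y)\}$) from the post-$u_k(y)$ survival event $S_k$, and verify that the factors of $\rho$ arising both from $\P(S_k\mid\mathcal{F}_{u_k(y)})=\rho$ and from the very definition of $\oP$ exactly cancel in the final identity.
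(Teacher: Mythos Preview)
Your proposal is correct and follows essentially the same approach as the paper: both introduce the post-$\sigma(y)$ hitting time of $x$, use the deterministic domination $t(x)-\sigma(y)\le \tilde t(y\to x)$ on $\{t(x)\ge\sigma(y)\}$, and then exploit that the event $\{t(x)\ge\sigma(y)\}$ depends only on the Harris construction before $\sigma(y)$ while $\tilde t(y\to x)$ depends only on the construction after $\sigma(y)$. The only difference is cosmetic: the paper invokes the renewal-type independence at $\sigma(y)$ as a black box (citing \cite[Lemma~8]{GaretMarch12}), whereas you unpack that renewal property explicitly via the decomposition over $\{K(y)=k\}$ and the strong Markov property at the stopping times $u_k(y)$, carrying the factor $\rho$ through by hand.
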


Before proving Lemma~\ref{derivBoundLem}, we explain how it implies Theorem~\ref{mainThm}. For this, we need an $L_p$-version of the shape theorem~\cite[Theorem 3]{GaretMarch12}.
\begin{lemma}
	\label{shapeThm}
	Let $p > 0$ be arbitrary. Then, $$\lim_{|x| \to \infty}\frac{\oE\left[t(x)^p \right]}{\mu(x)^p} = 1.$$
\end{lemma}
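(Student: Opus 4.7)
The plan is to upgrade the $\oP$-almost sure shape theorem to the $L^p$-statement by combining it with a uniform integrability bound. The shape theorem gives $t(x)/\mu(x) \to 1$ $\oP$-a.s.\ as $|x|\to\infty$, and since $\mu$ is a norm on $\R^d$, there exist constants $0 < c_1 \le c_2 < \infty$ with $c_1|x| \le \mu(x) \le c_2|x|$ for all $x \in \Zd \setminus \{o\}$. By the Vitali convergence theorem, the conclusion $\oE[(t(x)/\mu(x))^p] \to 1$ follows once the family $\{(t(x)/\mu(x))^p\}_{x \ne o}$ is uniformly integrable, for which it suffices to establish the stronger $L^q$-bound
\[
\sup_{x \in \Zd \setminus \{o\}} \frac{\oE[t(x)^q]}{|x|^q} < \infty \quad \text{for some } q > p.
\]

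Since $t(x) \le \sigma(x)$ pointwise, it is enough to show $\oE[\sigma(x)^q] = O(|x|^q)$, and this follows directly from Lemma~\ref{cor20} specialized to $A=\{o\}$. Indeed, choosing $L_0 = 2c'|x|/c$, one has $\oP(\sigma(x) > L) \le e^{-cL/2}$ for $L \ge L_0$, so the layer-cake formula gives
\[
\oE[\sigma(x)^q] = q\int_0^\infty L^{q-1}\oP(\sigma(x) > L)\,\mathrm{d}L \le L_0^q + q\int_{L_0}^\infty L^{q-1} e^{-cL/2}\,\mathrm{d}L,
\]
where the first term is $O(|x|^q)$ and the second is a finite constant independent of $x$. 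Dividing by $|x|^q$ and taking the supremum over $x$ yields the required uniform $L^q$-bound, and combining this with the $\oP$-almost sure shape convergence gives $L^p$-convergence of $t(x)/\mu(x)$ to $1$, hence convergence of the $p$-th moments.

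I do not anticipate any serious obstacle: the deep input is Lemma~\ref{cor20} (itself a mild extension of~\cite[Theorem 15]{GaretMarch14}), and the remainder is the standard ``a.s.\ convergence plus uniform integrability implies $L^p$-convergence'' machinery. The only conceptual point worth emphasizing is the pointwise inequality $t(x) \le \sigma(x)$, which lets us transfer moment bounds from essential hitting times to ordinary hitting times at no extra cost, and sidesteps the fact that the natural subadditive structure of the contact process is carried by $\sigma$ rather than by $t$.
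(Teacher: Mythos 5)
Your proposal is correct and takes essentially the same approach as the paper: both upgrade the almost sure shape theorem to an $L^p$ statement by supplying uniform integrability of $(t(x)/\mu(x))^p$, with the moment input ultimately coming from \cite[Theorem 15]{GaretMarch14} (which underlies Lemma~\ref{cor20}). The only organizational difference is that the paper first invokes Proposition~\ref{propEssTightness} to replace $t$ by $\sigma$ and then cites the $L^p$ boundedness of $\sigma(x)/\mu(x)$ directly, whereas you stay with $t$, use only the trivial pointwise bound $t(x)\le\sigma(x)$, and derive the $L^q$ bound by hand from the exponential tail in Lemma~\ref{cor20} — a slightly more self-contained variant of the same argument.
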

\begin{proof}
By Proposition~\ref{propEssTightness} it suffices to prove the claim with $\t$ replaced by $\s$. The shape theorem~\cite[Theorem 3]{GaretMarch12} gives almost sure convergence of $\sigma(x)/\mu(x)$ as $|x| \to \infty$. By~\cite[Theorem 15]{GaretMarch14}, $\{\sigma(x)/\mu(x)\}_{x \in \Zd \setminus \{o\}}$ is bounded in $L_p$ for every $p$; hence, for every $p$, $\{\sigma(x)^p/\mu(x)^p\}_{x \in \Zd \setminus \{o\}}$ is uniformly integrable. This concludes the proof.
\end{proof}

\begin{proof}[Proof of Theorem~\ref{mainThm}]
	First, for any $n \ge 1$ and $x\in\Zd$, 
	$$\oE[\t(nx)] = \sum_{k = 1}^n \oE[\t(kx) - \t((k-1)x)] \le \sum_{k = 1}^n \oE[\t(kx) - \t((k-1)x)] \one \{\oE[\t(kx)] \ge \oE[\t((k-1)x)]\}.$$
	Hence, introducing 
	$$I_{n,x} = \{k \in \{1,\ldots, n \}:\, \oE[\t(kx)] \ge \oE[\t((k-1)x)]\}$$
	as the index set of correctly ordered expected hitting times, we arrive at 
		$$\oE[\t(nx)] \le \sum_{k \in I_{n,x}} \oE[\t(kx) - \s((k-1)x)] + \sum_{k \in I_{n,x}}\oE[\s((k-1)x) - \t((k-1)x)]$$
	By Lemma~\ref{derivBoundLem}, the first sum is bounded above by 
		$$\oE[\t(x)]\sum_{k \in I_{n,x}}\oP(\s((k-1) x) \le \t(k x)) .$$
		Additionally, by Proposition~\ref{propEssTightness}, the second sum is bounded from above by a universal constant $C_1$. Hence, by Lemma~\ref{shapeThm}, 
		\begin{align*}
			\mu(x) &= \lim_{n \to \infty}n^{-1} \oE \t(nx) \\ 
			&\le C_1 + \oE[\t(x)] \liminf_{n \to \infty}\frac{\sum_{k \in I_{n,x}}\oP(\s((k-1) x) \le \t(k x)) }{n}\\
			&\le C_1 + \oE[\t(x)] \liminf_{n \to \infty}\frac{\sum_{k \in I_{n,x}}\oP(\t((k-1) x) \le \t(k x)) }{n}.
		\end{align*}
Now, dividing by $\mu(x)$ and taking the limit $|x| \to \infty$, the two statements of the theorem respectively follow from the trivial bounds
\begin{align*}& \sum_{k \in I_{n,x}}\oP(\t((k-1) x) \le \t(k x)) \leq \sum_{k=1}^n\oP(\t((k-1) x) \le \t(k x)), \\& \sum_{k \in I_{n,x}}\oP(\t((k-1) x) \le \t(k x)) \leq \#I_{n,x}.
\end{align*}
\end{proof}

It remains to prove Lemma~\ref{derivBoundLem}.

\begin{proof}[Proof of Lemma~\ref{derivBoundLem}]
	Let 
	$$\t(y, x) = \inf\{s \geq \s(y):\;(y, \s(y)) \rightsquigarrow (x, s)\}$$
	denote the first time that the infection from the space-time point $(y, \s(y))$ reaches $x$. By definition of $\s(y)$, this hitting time is almost surely finite under $\oP$. Moreover, $\t(y, x)$ is measurable with respect to the Harris construction \emph{after} time $\s(y)$, whereas $\{\t(x) \ge \s(y)\}$ is measurable with respect to the Harris construction \emph{before} time $\s(y)$. Hence, the renewal-type property of $\s(y)$ as in \cite[Lemma 8]{GaretMarch12} gives  
	\begin{align*}
	\oE\left[|t(x) - \sigma(y)|^p\cdot \mathds{1}\{t(x) \geq \sigma(y)\}\right] &\le \oE\left[t(y,x)^p \cdot \mathds{1}\{t(x) \geq \sigma(y)\} \right]\\& = \oP\left(t(x) \geq \sigma(y) \right) \cdot \oE\left[t(x-y)^p\right],
	\end{align*}
which proves the claim.
	%Similarly,
	%\begin{align*}
%		\oE[(\t(x) - \s(y)) \one\{\t(x) \ge \s(y)\}] \le \oE[\t(y, x) \one\{\t(x) \ge \s(y)\}] \le \oE[\t(x - y)] \oP(\t(x) \ge \s(y)),
%	\end{align*}
%	as asserted.

\end{proof}

%%%%%%%%%%%%%%%%%%%%%%%%%%%%%%%%%%%%%%%%%%%%%%%%%%%%%%%%%%%%%%%%%%%%%%%%%%%
%%%%%%%%%%%%%%%%%%%%%%%%%%%%%%%%%%%%%%%%%%%%%%%%%%%%%%%%%%%%%%%%%%%%%%%%%%%
\subsection{Proof of Theorem~\ref{thmTightness}}
%%%%%%%%%%%%%%%%%%%%%%%%%%%%%%%%%%%%%%%%%%%%%%%%%%%%%%%%%%%%%%%%%%%%%%%%%%%
%%%%%%%%%%%%%%%%%%%%%%%%%%%%%%%%%%%%%%%%%%%%%%%%%%%%%%%%%%%%%%%%%%%%%%%%%%%

In order to prove Theorem~\ref{thmTightness}, we note that $\t(x) - \t(y)$ can be large only if either $\t(x) - \s(y)$ or $\s(y) - \t(y)$ is large. In particular, with Lemma~\ref{derivBoundLem} and Proposition~\ref{propEssTightness} at our disposal, a quick algebraic manipulation gives us Theorem~\ref{thmTightness}.

\begin{proof}[Proof of Theorem~\ref{thmTightness}]
By the inequality
$$\oE\left[\left|\frac{t(x)-t(y)}{|x-y|} \right|^p \right] \leq 1 +  \oE\left[\left|\frac{t(x)-t(y)}{|x-y|} \right|^{p'} \right]$$
for $0 < p < p'$, it suffices to prove the statement for $p \geq 1$.	
	
	By Minkowski inequality,
\begin{align*}
&\left(\oE\left[|t(x) - t(y)|^p\right]\right)^{1/p}\\&\quad  \leq \left(\oE \left[(t(x) - t(y))^p \cdot \mathds{1}\{t(x) \ge t(y)\}\right]\right)^{1/p} + \left(\oE\left[(t(y) - t(x))^p \cdot \mathds{1}\{t(y) \ge t(x) \} \right]\right)^{1/p},
\end{align*}
	so that we have reduced the task to establishing an upper bound for the first summand. By distinguishing further between the events $\{\t(x) \ge \s(y)\}$ and $\{\t(y) \le \t(x) \le \s(y)\}$, and applying Lemma~\ref{derivBoundLem}, we arrive at 
	\begin{align*}
	&\left(\oE\left[|t(x) - t(y)|^p\cdot \mathds{1}\{t(x) \ge t(y) \}\right]\right)^{1/p}\\
	&\leq \left(\oE\left[|t(x) - \sigma(y)|^p\cdot \mathds{1}\{t(x) \ge t(y) \}\right]\right)^{1/p} + \left(\oE\left[| \sigma(y)-t(y)|^p\right]\right)^{1/p}\\
		&\leq \left(\oE\left[|t(x) - \sigma(y)|^p\cdot \mathds{1}\{t(x) \ge \sigma(y) \}\right]\right)^{1/p} + 2\left(\oE\left[| \sigma(y)-t(y)|^p\right]\right)^{1/p}\\
	& \leq \left(\oE\left[t(x-y)^p \right] \right)^{1/p} + 2\left(\oE\left[(\sigma(y) - t(y))^p \right] \right)^{1/p}.
	\end{align*}
	%\begin{align*}
	%	\lVert (\t(x) - \t(y)) \one\{\t(x) \ge \t(y)\}\rVert_p &\le \lVert(\t(x) - \s(y)) \one\{\t(x) \ge \s(y)\}\rVert_p + \lVert\s(y) - \t(y)\rVert_p \\
	%	&\le \lVert\t(x-y) \rVert_p + \lVert\s(y) - \t(y)\rVert_p.
	%\end{align*}
	An application of Lemma~\ref{shapeThm} and Proposition~\ref{propEssTightness} then concludes the proof.
\end{proof}

%

%	\section*{Acknowledgements}
%	None!

\bibliography{refs}
\bibliographystyle{abbrv}

\end{document}